\newtheorem{thm}{Theorem}
\newtheorem{lemma}{Lemma}[section]
\newtheorem{clm}[lemma]{Claim}
\newtheorem{cor}[lemma]{Corollary}
\newtheorem{defn}[lemma]{Definition}
\newcommand{\XSays}[3]{{\color{#2}
      {$\rule[-0.12cm]{0.2in}{0.5cm}$\fbox{
            #1:} }%
      \itshape #3
      \marginpar{\color{#2} #1}%
      \def\comment{#3}\def\empty{}\ifx\comment\empty\else
      {$\rule[0.1cm]{0.3in}{0.1cm}$\fbox{
            end}$\rule[0.1cm]{0.3in}{0.1cm}$} \fi
   }%
}
\numberwithin{equation}{subsection}
\begin{document}
\title{Bootstrap Percolation on  Degenerate Graphs}
\author{Marinus Gottschau\\Technische Universit\"at M\"unchen}
\maketitle
\begin{abstract}
In this paper we focus on $r$-neighbor bootstrap percolation, which is a process on a graph where initially a set $A_0$ of vertices gets infected. Now subsequently, an uninfected vertex becomes infected if it is adjacent to at least $r$ infected vertices. Call $A_f$ the set of vertices that is infected after the process stops. More formally set $A_t:=A_{t-1}\cup \{v\in V: |N(v)\cap A_{t-1}|\geq r\}$, where $N(v)$ is the neighborhood of $v$. Then $A_f=\bigcup_{t>0} A_t$. We deal with finite graphs only and denote by $n$ the number of vertices.\\
We are mainly interested in the size of the final set $A_f$. We present a theorem for degenerate graphs that bounds the size of the final infected set. More precisely for a $d$-degenerate graph, if $r>d$, we bound the size set $A_f$ from above by $(1+\tfrac{d}{r-d})|A_0|$.
\end{abstract}
\section{Introduction}
An $r$-neighbor bootstrap percolation process on some given graph $G=(V,E)$ with vertex set $V$ and edge set $E$ is an discrete time infection process. Initially there is some set of infected vertices. Now at every time step, every vertex that has at least $r$ infected neighbors becomes infected in the next time step. The process was first introduced by Chalupa, Leath and Reich in 1979 in~\cite{origin} and is a simple example for a cellular automaton. It is also closely related to the Glauber dynamics which represent the Ising model at zero-temperature (see \cite{Ising}). Another application one can think of is rumor spreading in a social network. Instead of speaking of infection the literature also uses the term activation. We shall stick to the term infection and  write bootstrap percolation process instead of $r$-neighbor bootstrap percolation process.\\
We call the set of initially infected vertices $A_0$ and the vertices that are infected at the end of the process $A_f$. More formally, we set the vertices which are infected at time $t$ to be $A_t:=A_{t-1}\cup \{v\in V: |N(v)\cap A_{t-1}|\geq r\}$ where $N(v)$ denotes the neighborhood of $v$ and thus $A_f=\bigcup_{t>0} A_t$. Note that there are two commonly used ways of obtaining the set $A_0$. On the one hand, one can infect each vertex independently with a given probability $p$. On the other hand, when dealing with a finite graph, one might want $A_0$ to be of a given size. Here one chooses uniformly at random from all sets that are of the given size.\\
For several graph classes there is already much known about this process. There are a few things that are of interest. First of all one can study the probability of percolation, i.e. the probability that $A_f=V$, depending on $p$, which is the probability with which each vertex independently is initially infected. Let $\alpha\in [0,1]$, define ${p_\alpha(G,r):=\inf\{p: \mathbb{P}[A_f=V]\geq \alpha\}}$. Several authors surveyed the critical probability for $\alpha=\tfrac{1}{2}$, which means that percolation is more likely to occur than no percolation. For example, if the underlying graph is the the $d$-dimensional cube graph $[n]^d$, Aizenman and Lebowitz \cite{grid} showed that for fixed $d$, we have
\begin{align*}
p_{\frac{1}{2}}([n]^d,2)=\Theta\left(\frac{1}{\log n}\right)^{d-1}.
\end{align*}
Later Cerf and Manzo \cite{grid2} generalized this to
\begin{align*}
p_{\frac{1}{2}}([n]^d,r)=\Theta\left(\frac{1}{\log_{(r-1)} n}\right)^{d-r+1},
\end{align*}
where $\log_{(r-1)}$ is an $r$ times iterated logarithm. The exact threshold function is known for $d=r=2$ and was shown by Holroyd in \cite{rd2} to be $\tfrac{\pi^2}{18 \log n}+ o(\tfrac{1}{\log n})$. Balogh et al.~studied in \cite{threshold} the case $d=r=3$ and gave a conjecture on the threshold function for more general parameter choices.\\
Additionally, bootstrap percolation was studied on trees like periodic trees in \cite{periodictrees} and Galton-Watson trees in \cite{trees}, \cite{GWtrees} and \cite{GWtrees1}.\\
Some papers also study the size of minimal percolating sets which are sets of vertices that infect the complete graph, but any proper subset does not. Riedl showed in \cite{riedl} that for a tree on $n$ vertices with $l$ vertices of degree less than $r$, a minimal percolating set $A_0$ is of size
\begin{align*}
\frac{(r-1)n+1}{r}\leq |A_0| \leq \frac{rn+l}{r+1}.
\end{align*}
He also gave an algorithm that computes the size of a smallest percolating set as well as the size of a largest minimal percolating set and also did some work on hypercubes under 2-bootstrap percolation (see \cite{riedl2}).\\
Also other graphs like the well known Erd\H{o}s-R\'enyi random graph have been studied, for example in \cite{Gnp} by Janson et al. Here the authors give a function for the edge probabilities when percolation occurs with high probability, depending on the size of $A_0$. Also they give a criterion for almost sure percolation when the edge probabilities are given, again depending on the size of the initially infected vertex set $A_0$.\\
Another quite interesting parameter is the running time of such a process, which is the time until no new vertex becomes infected, i.e.~the least $t$ such that $A_t=A_{t+1}=A_f$. This parameter has been studied for several graphs like the grid $[n]^2$, where Benevides and Przykucki \cite{time} showed that for $r=2$ the running time is bounded by $\tfrac{13}{18}n^{2}+\mathcal O(n)$. In \cite{time1} Przykucki considered bootstrap percolation on the $d$-dimensional hypercube and proved the time to be at most $\lfloor\tfrac{d^2}{3}\rfloor$ again for $r=2$. Bollob{\'a}s et al.~\cite{time2} analyzed the time of bootstrap percolation on the discrete torus while Janson et al.~\cite{Gnp} gave a time bound for the percolation process on the Erd{\"o}s R{\'e}nyi random graph.\\
In this paper we focus on the size of the infected set $A_f$ at the end of the process for degenerate graphs. We give a result for $r$-neighbor bootstrap percolation when the underlying graph is a degenerate graph and the set $A_0$ is any subset of the vertices. As many graph classes have bounded degeneracy our result also covers many other classes.
\section{Bootstrap percolation on degenerate graphs}
Our result gives a bound on the size of the set $A_f$ of vertices that are infected at the end of the process on a degenerate graph, so let us first define degeneracy.
\begin{defn}
A finite graph $G=(V,E)$ is called $d$-degenerate if every subgraph contains a vertex of degree at most $d$.
\end{defn}
The literature also uses the term $d$-inductive instead of $d$-degenerate.
There are many graph classes that have a bounded degeneracy. For example forests are 1 degenerate graphs. Planar graphs are 5 degenerate while outerplanar graphs are 2 degenerate. There are many more graph classes for which their degeneracy is known.

The definition of degeneracy has a useful equivalence as stated in the following lemma.
\begin{lemma}\label{ordering}
A graph $G=(V,E)$ is $d$-degenerate if and only if it has an ordering of the vertices on a line such that each vertex has at most $d$ neighbors to its left, that means 
\begin{align*}
{|\{j: j<i \wedge \{i,j\}\in E\}| \leq d \qquad \forall i\in[n]}.
\end{align*}
Such an ordering is called Erd\H{o}s-Hajnal sequence.
\end{lemma}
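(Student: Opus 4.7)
The plan is to prove both directions of the equivalence by peeling off vertices, using the hypothesis on subgraphs for the forward direction and the ordering directly for the backward direction.

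For the forward direction, assume $G$ is $d$-degenerate and construct the ordering from right to left. Since $G$ itself is a subgraph of itself, it contains a vertex $v_n$ of degree at most $d$; place it in position $n$. Now observe that $G - v_n$ is a subgraph of $G$ and hence is still $d$-degenerate (every subgraph of $G - v_n$ is a subgraph of $G$), so it contains a vertex $v_{n-1}$ of degree at most $d$ in $G - v_n$; place it at position $n-1$. Iterating, having chosen $v_n, v_{n-1}, \ldots, v_{i+1}$, the remaining induced subgraph on $\{v_1, \ldots, v_i\}$ (whatever relabeling is left) contains a vertex of degree at most $d$, which we call $v_i$. By construction, the only neighbors of $v_i$ that lie to its left in the ordering are among the remaining vertices $v_1, \ldots, v_{i-1}$, and $v_i$ has at most $d$ such neighbors since its degree in the induced subgraph on $\{v_1, \ldots, v_i\}$ is at most $d$. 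This produces the desired Erd\H{o}s-Hajnal sequence.

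For the backward direction, assume an ordering $v_1, \ldots, v_n$ exists with the stated property, and let $H$ be any subgraph of $G$. Take the vertex $v_i$ of $H$ with the largest index $i$ in the ordering. Every neighbor of $v_i$ in $H$ is in particular a neighbor of $v_i$ in $G$ and lies in $H$, hence has index smaller than $i$. Thus the $H$-neighbors of $v_i$ form a subset of $\{j < i : \{i,j\} \in E\}$, which has cardinality at most $d$. So $v_i$ has degree at most $d$ in $H$, proving $d$-degeneracy.

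I do not foresee a genuine obstacle here: both implications rely only on choosing the extremal vertex (a low-degree one for the forward direction, the rightmost one for the backward direction) and reading off the bound. The only point that requires a moment of care is noting, in the forward direction, that the induced subgraph obtained after removing already-placed vertices inherits $d$-degeneracy from $G$, which is immediate since the defining property quantifies over \emph{all} subgraphs.
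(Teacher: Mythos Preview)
Your proof is correct and follows essentially the same approach as the paper: both directions proceed exactly by peeling off a low-degree vertex from the right for the forward implication, and by picking the rightmost vertex of an arbitrary subgraph for the backward implication. Your write-up is slightly more explicit in noting that the remaining induced subgraph inherits $d$-degeneracy, but the argument is the same.
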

\begin{proof}
Given a $d$-degenerate graph consider $|V|$ positions on the line. Take the vertex in $V$ with degree at most $d$ and put it to the right most position. Subsequently consider the remaining vertex set, which is a subgraph of $G$ and therefore contains again a vertex of degree at most $d$. Again put this vertex to the now right most free position. This vertex has at most $d$ neighbors in the remaining vertex set, which will be completely put in the left neighborhood. In this way we obtain an ordering with the desired property.\\
The other direction is even more straight forward. Given a graph with an ordering of the vertices on a line such that each vertex has at most $d$ neighbors in its left neighborhood the right most vertex in any subgraph has degree at most $d$. Hence the equivalence is proven.
\end{proof}
Let us now state our main theorem.
\begin{thm}\label{degenerate}
Let $G$ be a finite $d$-degenerate graph and $A_0$ be the set of the initially infected vertices of an $r$-bootstrap process with $r\geq d+1$. Then the set $A_f$ of vertices that are infected at the end of the process fulfills
\begin{align*}
|A_0|\leq|A_f|\leq \left(1+\frac{d}{r-d}\right) |A_0|.
\end{align*}   
\end{thm}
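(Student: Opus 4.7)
The plan is to combine the Erd\H{o}s-Hajnal characterization of $d$-degeneracy (Lemma \ref{ordering}) with a double-counting argument on the edges of the induced subgraph $H := G[A_f]$. The lower bound $|A_0|\leq|A_f|$ is immediate from the monotonicity of the process, so only the upper bound requires work.

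The first step exploits degeneracy. Since $H$ is a subgraph of the $d$-degenerate graph $G$, it is itself $d$-degenerate, so Lemma \ref{ordering} supplies an ordering of $A_f$ in which every vertex has at most $d$ neighbors to its left. Summing these left-degrees over all of $A_f$ yields $|E(H)|\leq d\,|A_f|$.

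The second step exploits the bootstrap condition. I would orient each edge of $H$ from the endpoint infected earlier to the endpoint infected later, treating all of $A_0$ as infected at time $0$ and breaking ties arbitrarily. For any $v \in A_f \setminus A_0$, the defining property of its infection time $t_v$ provides at least $r$ neighbors in $A_{t_v-1}\subseteq A_f$ that were infected strictly before $v$; these neighbors unambiguously contribute to $v$'s in-degree in the oriented graph, so every vertex of $A_f \setminus A_0$ has in-degree at least $r$. Summing in-degrees only over $A_f \setminus A_0$ (and using that total in-degree equals the number of edges) gives $r\,|A_f \setminus A_0|\leq |E(H)|$.

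Chaining the two inequalities produces $r\,|A_f \setminus A_0|\leq d\,|A_f|=d\,|A_0|+d\,|A_f \setminus A_0|$, which rearranges immediately to $|A_f \setminus A_0|\leq \tfrac{d}{r-d}|A_0|$, equivalent to the stated bound. The one subtle point is verifying that the orientation does not double-count an edge toward $A_f \setminus A_0$; this is automatic because the $r$ triggering neighbors of any $v \in A_f \setminus A_0$ are strictly earlier in infection time than $v$, so the orientation of those edges is forced, and the tie-breaking applied to the remaining edges of $H$ does not affect the estimate. Everything else is bookkeeping.
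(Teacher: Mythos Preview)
Your argument is correct, but it is organized differently from the paper's. The paper fixes an Erd\H{o}s--Hajnal ordering of all of $G$ and runs a \emph{dynamic} potential: after the $i$th infection, $\Psi_i$ is the total number (with multiplicity) of uninfected left-neighbors of currently infected vertices; one checks $\Psi_0\le d|A_0|$ and that each single infection lowers $\Psi$ by at least $r-d$, which immediately bounds the number of infections beyond $A_0$. Your proof is instead a \emph{static} double count on $H=G[A_f]$: degeneracy gives $|E(H)|\le d|A_f|$, while orienting edges by infection time gives in-degree $\ge r$ at every vertex of $A_f\setminus A_0$, hence $r\,|A_f\setminus A_0|\le |E(H)|$. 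Both arguments ultimately exploit the same two facts (each vertex has $\le d$ left/earlier neighbors; each newly infected vertex has $\ge r$ earlier infected neighbors), but your global edge count is shorter and avoids the step-by-step bookkeeping, while the paper's potential formulation makes the per-step drop of $r-d$ explicit and ties more visibly to the running-time corollary.
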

\begin{proof}
We introduce a potential $\Psi$, which in each step of the infection process bounds the number of vertices that might be infected in the next step from above. Due to Lemma \ref{ordering} there exists an ordering of the vertices where each vertex has at most $d<r$ neighbors to its left. Since $d<r$, every vertex that becomes infected at some point must have at least one already infected vertex to its right. Think of an enumeration of the vertex infections in the process, where an infection that occurs before another one has a smaller number. Here the specific ordering of the vertices that become infected at the same time does not matter. Now after the $i$th infection, define $\Psi_i$ to be the sum of the number of uninfected vertices in the left neighborhood of each vertex that is currently infected. Note that an uninfected vertex might be counted twice or even more often in our potential if it has more than one infected vertex in its right neighborhood. So $\Psi_0 \leq |A_0|\cdot d$ since every initially infected vertex has at most $d$ uninfected neighbors to its left.
\begin{clm}
The potential decreases after one infection by at least $r-d$, i.e.~we have that $\Psi_{i-1}-\Psi_{i}\geq r-d$.
\end{clm}
\begin{proof}[Proof of claim]
Consider the $i$th infected vertex $v$. Due to the degeneracy, $v$ has at most $d$ uninfected neighbors in its left neighborhood and therefore $v$ can increase the potential by at most $d$. Observe next that for vertex $v$ to become infected it must have at least $r$ infected neighbors. Now there are two kinds of such infecting vertices, namely those that lie in the right neighborhood of $v$ and those that lie in the left neighborhood of $v$. For each infected vertex that lies in the right neighborhood the potential decreases by one, since the vertex $v$ is no longer uninfected and therefore does not contribute to the potential. Each infected vertex in the left neighborhood does not add to the potential either, since it is already infected. Since $r>d$ the potential decreases by at least $r-d$ after each infection.
\end{proof}
Once the potential is less than $1$ the process stops, because in this case no uninfected vertex would have an infected vertex in its right neighborhood and hence could be infected, as $r>d$. Using the claim and the fact that $\Psi_0\leq d|A_0|$ we get
\begin{align*}
|A_f|\leq |A_0|+ \frac{\Psi_0}{r-d}\leq |A_0|+\frac{d |A_0|}{r-d}=\left(1+\frac{d}{r-d}\right) |A_0|.
\end{align*}
It is obvious that $|A_0| \leq |A_f|$ which finishes the proof of Theorem \ref{degenerate}.
\end{proof}
It is also remarkable that our bound on the size of $A_f$ is sharp in a sense: For every $d\geq 1$, $r\geq 
2$ and $\varepsilon>0$ there exists a $d$-degenerate graph with an initially infected set $A_0$ such that the set $A_f$ at the end of the process fulfills ${|A_f|\geq (1-\varepsilon)\left(1+\tfrac{d}{r-d}\right) |A_0|}$. For the construction of such a graph take a set $H$ of $d$ vertices. Consider for some arbitrary natural number $k$, pairs $(U_i,I_i)$ of sets of vertices with $|U_i|=d$ and $|I_i|=r-d$ with $i\in [k]$. Now every vertex in $I_i$ is adjacent to all vertices in $U_i$. Furthermore every vertex in $H$ is adjacent to all vertices in $U_i$ for all $i\in [k]$. Finally choose $A_0=H \cup (\bigcup_{i=1}^k I_i)$. It is easy to see that this graph is a $d$-degenerate graph by construction. A schematic picture of the graph can been seen in figure \ref{examplegraph}

\begin{figure}[H]
\centering
\includegraphics[scale=1]{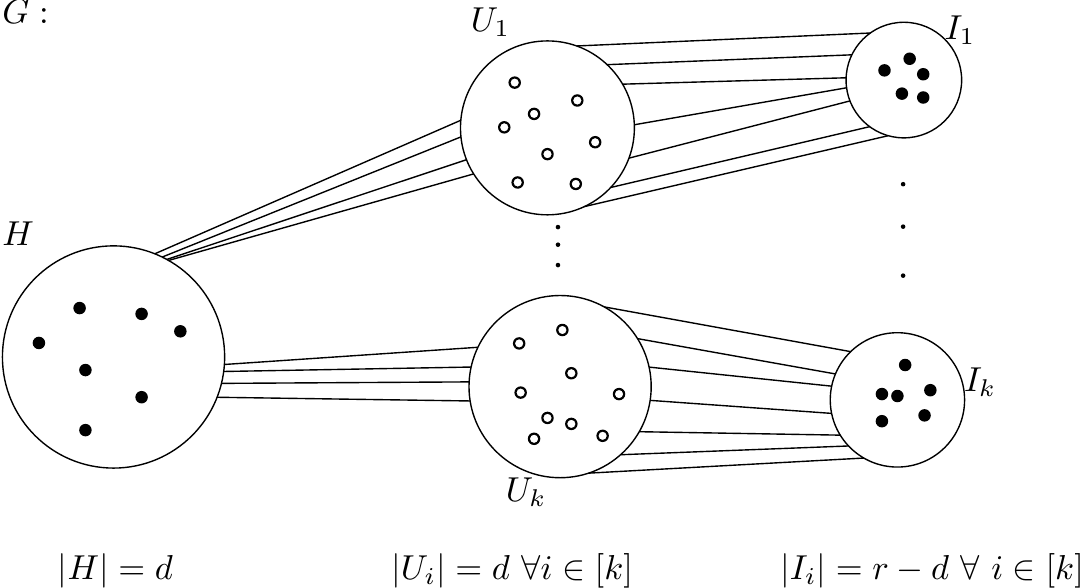}
\caption{Schematic picture of the constructed graph}\label{examplegraph}
\end{figure}

The set $A_0$ is of size $d+k(r-d)$ while the set $A_f$, since every vertex will be infected at the end, is of size $d+k(r-d)+kd=d+kr$. Therefore 
\begin{align*}
\frac{|A_f|-|A_0|}{|A_0|}= \frac{d+kr-(d+k(r-d))}{d+k(r-d)}=\frac{kd}{d+k(r-d)}=\frac{d}{\tfrac{d}{k}+r-d},
\end{align*}
which tends to $\frac{d}{r-d}$ for $k$ tending to infinity. Thus one can choose $k$ large enough such that $|A_f|$ is arbitrarily close to the given upper bound.\\
We would also like to mention that the condition $d\leq r+1$ cannot be dropped due to the fact that the case $d\geq r$ allows the graph to contain a set $H$ of $r$ vertices that are adjacent to all other vertices. Then $H=A_0$ with $|A_0|=r$ suffices to infect the complete graph.\\
Let us now state some Corollaries
\begin{cor}
	Given a $d$-degenerate graph on $n$ vertices. Then the size of a minimal percolating set $A_0$ in the $r$-bootstrap process with $r\geq 2$ fulfills
	\begin{align*}
	 n \cdot \frac{r-d}{r}\leq \vert A_0 \vert.
	\end{align*}
\end{cor}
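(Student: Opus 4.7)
The plan is to derive this as an immediate consequence of Theorem \ref{degenerate}. The key observation is that "percolating set" means $A_f = V$, so $|A_f| = n$, and we can simply invert the upper bound on $|A_f|$ in terms of $|A_0|$ to get a lower bound on $|A_0|$.

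More concretely, I would first split into two cases according to the relation between $r$ and $d$. If $r \leq d$, then $\frac{r-d}{r} \leq 0$, so the inequality $n \cdot \frac{r-d}{r} \leq |A_0|$ holds trivially (since $|A_0| \geq 0$, and in fact $|A_0| \geq 1$ if a percolating set exists at all, unless $n = 0$). This case has nothing to do with the bootstrap dynamics and does not require Theorem \ref{degenerate}.

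For the main case $r \geq d+1$, Theorem \ref{degenerate} applies directly. Since $A_0$ percolates, $A_f = V$ and hence $|A_f| = n$. Plugging into the theorem gives
\begin{align*}
n = |A_f| \leq \left(1 + \frac{d}{r-d}\right)|A_0| = \frac{r}{r-d}\,|A_0|,
\end{align*}
and multiplying both sides by $\frac{r-d}{r}$ (which is positive because $r > d$) yields
\begin{align*}
n\cdot\frac{r-d}{r} \leq |A_0|,
\end{align*}
which is exactly the claimed inequality.

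There is no real obstacle here: the whole argument is a one-line reformulation of the previously established theorem. The only subtle point worth mentioning explicitly is the mild discrepancy between the hypothesis "$r \geq 2$" of the corollary and the hypothesis "$r \geq d+1$" of Theorem \ref{degenerate}, which is handled by the trivial case distinction above. Also, the minimality of the percolating set is not used in the proof; the bound holds for every percolating set, and hence in particular for the minimal ones.
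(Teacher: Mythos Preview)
Your proposal is correct and follows the same approach as the paper, which simply sets $|A_f| = n$ in Theorem~\ref{degenerate} and rearranges. Your explicit case distinction for $r \leq d$ is in fact more careful than the paper's own one-line justification, which invokes the theorem without commenting on the mismatch between the corollary's hypothesis $r \geq 2$ and the theorem's hypothesis $r \geq d+1$; your remark that minimality is unused is likewise accurate.
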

This is simply obtained by setting $\vert A_f\vert=n$ in Theorem \ref{degenerate}.\\ 
Note that every tree is a $1$-degenerate graph, since every subgraph of a tree is a forest and thus contains at least one leaf, which is a vertex of degree one. As mentioned in the introduction, Riedl considered the size of minimal percolating sets on trees and proved in \cite{riedl} that a minimal percolating set is at least of size $\tfrac{r-1}{r}n+\tfrac{1}{r}$. Our bound yields asymptotically the same bound. Also the construction given in figure \ref{examplegraph} suggests that this bound is also tight in a sense that there exist $d$-degenerate graphs for which the minimal percolating set $A_0$ is of size $n (\tfrac{r-d}{r}+\varepsilon)$.
\begin{cor}
Given a forest on $n$ vertices. Then the size of $A_f$ in the $r$-bootstrap process with a given set $A_0$ and $r\geq 2$ fulfills
\begin{align*}
\vert A_0\vert \leq \vert A_f \vert \leq \frac{r}{r-1}\vert A_0\vert.
\end{align*}
\end{cor}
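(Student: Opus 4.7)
The plan is to obtain this as an immediate specialization of Theorem \ref{degenerate} by showing that a forest is $1$-degenerate and then substituting $d=1$ into the main bound. So the whole argument reduces to verifying the degeneracy parameter and doing one line of arithmetic; there is no real obstacle, and I would phrase it as a one-paragraph application.

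First I would recall the standard fact that every forest is $1$-degenerate: any subgraph of a forest is itself a forest, and every non-empty finite forest contains a leaf, i.e.\ a vertex of degree at most $1$. Equivalently, via Lemma \ref{ordering}, one may repeatedly peel leaves to obtain an Erd\H{o}s--Hajnal sequence in which every vertex has at most one earlier neighbor. This gives $d=1$ and allows the hypothesis $r \geq d+1 = 2$ of Theorem \ref{degenerate} to be met precisely by the assumption $r \geq 2$ in the corollary.

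Next I would invoke Theorem \ref{degenerate} with $d=1$ directly. The lower bound $|A_0|\leq |A_f|$ is immediate from the definition of the process, since $A_0 \subseteq A_f$. For the upper bound, the theorem yields
\begin{align*}
|A_f| \leq \left(1+\frac{1}{r-1}\right)|A_0| = \frac{r}{r-1}|A_0|,
\end{align*}
which is exactly the claimed bound.

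The only thing to be a little careful about is making sure the reader sees that no new argument is needed: the corollary is a literal specialization, not a strengthening, and no additional structural property of forests (acyclicity, tree decompositions, etc.) is required beyond $1$-degeneracy. I would not expect any step here to be the main obstacle; the content of the corollary is entirely carried by Theorem \ref{degenerate}.
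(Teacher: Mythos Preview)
Your proposal is correct and matches the paper's own argument exactly: the paper simply notes that every forest is $1$-degenerate (since any subgraph is again a forest and hence contains a leaf) and then applies Theorem \ref{degenerate} with $d=1$. Your slightly more detailed justification of $1$-degeneracy and the arithmetic $1+\tfrac{1}{r-1}=\tfrac{r}{r-1}$ is fine and adds no unnecessary content.
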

As mentioned above, every forest is a $1$-degenerate graph and thus the corollary follows from Theorem \ref{degenerate}.
\begin{cor}
	Given a $d$-degenerate graph on $n$ vertices. Then the running time $\tau$ of the $r$-bootstrap process with a given set $A_0$ and $r> d$ is bounded by
	\begin{align*}
	\tau \leq  \frac{d}{r-d} \vert A_0 \vert.
	\end{align*}
\end{cor}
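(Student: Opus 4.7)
The plan is to derive this corollary almost immediately from Theorem \ref{degenerate} by observing that each non-terminal time step infects at least one new vertex, so the running time is bounded above by the total number of newly infected vertices.

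More concretely, I would first argue that for every $t<\tau$ we have $A_t \subsetneq A_{t+1}$. Indeed, if at some step $t<\tau$ we had $A_t = A_{t+1}$, then by the definition of the infection rule the process would already be stationary at time $t$, contradicting the minimality of $\tau$. Consequently each of the $\tau$ steps contributes at least one new infected vertex, which gives
\begin{align*}
\tau \;\leq\; |A_f| - |A_0|.
\end{align*}

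The second step is then to insert the bound of Theorem \ref{degenerate}, namely $|A_f| \leq (1+\tfrac{d}{r-d})|A_0|$, which applies since $r > d$. This immediately yields
\begin{align*}
\tau \;\leq\; |A_f| - |A_0| \;\leq\; \left(1 + \frac{d}{r-d}\right)|A_0| - |A_0| \;=\; \frac{d}{r-d}\,|A_0|,
\end{align*}
finishing the proof.

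There is essentially no genuine obstacle here; the only small subtlety worth spelling out is the ``at least one new infection per step'' observation, which must be justified from the definition of $\tau$ as the least index with $A_t=A_{t+1}$. Once that is in place, the corollary is a one-line consequence of the main theorem, so my proposal is to keep the write-up to these two short paragraphs rather than dressing it up.
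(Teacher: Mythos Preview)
Your proposal is correct and matches the paper's own argument essentially verbatim: the paper simply remarks that the corollary ``follows immediately from Theorem \ref{degenerate}, as every additional infection takes at most one time step,'' which is exactly your observation that $\tau \leq |A_f|-|A_0|$ followed by the bound of the main theorem. You have merely spelled out the step $A_t \subsetneq A_{t+1}$ for $t<\tau$ more carefully than the paper does.
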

This again follows immediately from Theorem \ref{degenerate}, as every additional infection takes at most one time step.

\bibliographystyle{amsplain}
\bibliography{bibliography}{}

\providecommand{\bysame}{\leavevmode\hbox to3em{\hrulefill}\thinspace}
\providecommand{\MR}{\relax\ifhmode\unskip\space\fi MR }
\providecommand{\MRhref}[2]{%
  \href{http://www.ams.org/mathscinet-getitem?mr=#1}{#2}
}
\providecommand{\href}[2]{#2}
\begin{thebibliography}{10}

\bibitem{grid}
M.~Aizenman and J.~L. Lebowitz, \emph{Metastability effects in bootstrap
  percolation}, Journal of Physics A: Mathematical and General \textbf{21}
  (1988), no.~19, 3801.

\bibitem{threshold}
J.~Balogh, B.~{Bollob{\'a}s}, and R.~Morris, \emph{Bootstrap percolation in
  three dimensions}, The Annals of Probability \textbf{37} (2009), no.~4,
  1329--1380.

\bibitem{GWtrees1}
J.~{Balogh}, Y.~{Peres}, and G.~{Pete}, \emph{{Bootstrap percolation on
  infinite trees and non-amenable groups}}, ArXiv Mathematics e-prints (2003).

\bibitem{trees}
J.~Balogh and B.~G. Pittel, \emph{Bootstrap percolation on the random regular
  graph}, Random Structures \& Algorithms \textbf{30} (2007), no.~1-2,
  257--286.

\bibitem{time}
F.~{Benevides} and M.~{Przykucki}, \emph{{Maximum percolation time in
  two-dimensional bootstrap percolation}}, ArXiv e-prints (2013).

\bibitem{GWtrees}
B.~Bollob{\'a}s, K.~Gunderson, C.~Holmgren, S.~Janson, and M.~Przykucki,
  \emph{Bootstrap percolation on {Galton-Watson} trees}, Electronic Journal of
  Probability \textbf{19} (2014).

\bibitem{time2}
B.~{Bollob{\'a}s}, C.~{Holmgren}, P.~{Smith}, and A.~J. {Uzzell}, \emph{{The
  time of bootstrap percolation for dense initial sets}}, ArXiv e-prints
  (2012).

\bibitem{periodictrees}
M.~{Bradonji{\'c}} and I.~{Saniee}, \emph{{Bootstrap Percolation on periodic
  trees}}, ArXiv e-prints (2013).

\bibitem{grid2}
R.~Cerf and F.~Manzo, \emph{The threshold regime of finite volume bootstrap
  percolation}, Stochastic Processes and their Applications \textbf{101}
  (2002), no.~1, 69 -- 82.

\bibitem{origin}
J.~Chalupa, P.~L. Leath, and G.~R. Reich, \emph{Bootstrap percolation on a
  {Bethe} lattice}, Journal of Physics C: Solid State Physics \textbf{12}
  (1979), no.~1.

\bibitem{rd2}
A.~E. Holroyd, \emph{Sharp metastability threshold for two-dimensional
  bootstrap percolation}, Probability Theory and Related Fields \textbf{125}
  (2003), no.~2, 195--224.

\bibitem{Gnp}
S.~Janson, T.~{\L}uczak, T.~Turova, and T.~Vallier, \emph{{Bootstrap
  percolation on the random graph $G_{n,p}$}}, Ann. Appl. Probab. \textbf{22}
  (2012), no.~5, 1989--2047.

\bibitem{Ising}
A.~D. Levin, M.~J. {\L}uczak, and Y.~Peres, \emph{Glauber dynamics for the
  mean-field ising model: cut-off, critical power law, and metastability},
  Probability Theory and Related Fields \textbf{146} (2010), no.~1-2, 223--265.

\bibitem{time1}
M.~Przykucki, \emph{Maximal percolation time in hypercubes under 2-bootstrap
  percolation}, The Electronic Journal of Combinatorics \textbf{19} (2012),
  1–13.

\bibitem{riedl2}
E.~Riedl, \emph{Largest minimal percolating sets in hypercubes under
  2-bootstrap percolation}, The Electronic Journal of Combinatorics \textbf{17}
  (2010), no.~1.

\bibitem{riedl}
\bysame, \emph{Largest and smallest minimal percolating sets in trees}, The
  Electronic Journal of Combinatorics \textbf{19} (2012), no.~1, P64.

\end{thebibliography}

\end{document}